\newtheorem{theorem}{Theorem}
\newtheorem{corollary}{Corollary}
\newenvironment{proof}{{\sc Proof:}}{~\hfill$\Box$}
\newtheorem{remark}{Remark}
\def\e{\mathrm{e}}
\def\Z{\mathbb{Z}}
\def\N{\mathbb{N}}
\def\ra{\rightarrow}
\def\R{\mathbb{R}}
\def\L{\widehat{\ell}}
\begin{document}
\title{On the moments of roots of Laguerre-polynomials and the Marchenko-Pastur law}



\author{\Large{M. Kornyik}  \\  E\"otv\"os Lor\'and University \\ Department of Probability Theory and 
Statistics \\ P\'azm\'any P\'eter s\'et\'any 1/C., H-1117, Budapest, Hungary \\
              \textit{email}:koma@cs.elte.hu        \and
        \Large{Gy. Michaletzky} \\   E\"otv\"os Lor\'and University \\ Department of Probability Theory and 
Statistics \\ P\'azm\'any P\'eter s\'et\'any 1/C., H-1117, Budapest, Hungary\\
              \textit{email}:michaletzky@caesar.elte.hu}
\maketitle

\begin{abstract}
        In this paper we compute the leading terms in the sum of the $k^{th}$ power of the roots of
$L_{p}^{(\alpha)}$, the Laguerre-polynomial of degree $p$ with parameter $\alpha$.
The connection between the Laguerre-polynomials and the Marchenko-Pastur distribution is expressed
by the fact, among others, that
the limiting distribution of the empirical distribution of the normalized roots of the Laguerre-polynomials is given by
the Marchenko-Pastur distribution. We give a direct proof of this statement based on the recursion satisfied
by the Laguerre-polynomials. At the same time,
our main result gives that the leading term in $p$ and $(\alpha+p)$ of the sum of the $k^{th}$ power of the roots of
$L_{p}^{(\alpha)}$ coincides with the $k^{th}$ moment of the Marchenko-Pastur law.
We also mention the fact that the expectation of the characteristic polynomial of a $XX^T$ type random covariance matrix,
where $X$ is a $p\times n$ random matrix with iid elements, is $\ell^{(n-p)}_p$, i.e. the monic version of the $p^{th}$ Laguerre polynomial with parameter $n-p$.
\end{abstract}

\section{Introduction}

In theory of orthogonal polynomials the limit of the empirical distribution of their roots is a much studied matter.
In this paper we are going to study the limit distribution of the roots of Laguerre polynomials $L_p^{(\alpha_p)}$, where
\begin{align}\label{Lag_coeff}
    L_p^{(\alpha)}(x)=\sum_{j=0}^p{(-1)^j\binom{\alpha+p}{p-j}\frac{x^j}{j!}} \ \ \ \ \ \alpha\in\R
\end{align}
assuming that $\alpha_p/p\ra c>-1$.
For $\alpha>-1$ these polynomials are known to be orthogonal with respect
to the measure $x^{\alpha}e^{-x}\textbf{1}_{ [0,\infty]} dx$, from which one can conclude that
all the roots are distinct and lie in $\R_+$. For $\alpha\in [-p+1,-1]\cap\Z$ one has that
$$ L_p^{(\alpha)}(x)=x^{-\alpha}L_{p+\alpha}^{(-\alpha)}(x) $$
and hence one can make the conclusion that for such $\alpha$ values the polynomial
$L_p^{(\alpha)}$ has $p+\alpha$ disticnt positive roots and $0$ is also a root with multiplicity $-\alpha$.

In section 1 we show that the normalized generating function of the moments of the normalized roots of
$L_p^{(\alpha_p)}$ satisfies
the same quadratic fixed point equation in the limit as the generating function of the moments of the Marchenko-Pastur 
distribution.

In section 2 we will explicitly show that the coefficient of the highest order term (viewed as a polynomial in $p$) of the $k^{th}$ power of the roots of $L_p^{(\alpha)}$ coincides with the $k^{th}$ moment of the corresponding
Marchenko-Pastur distribution.

\section{Convergence of the empirical distribution}

Let us consider the roots of the Laguerre-polynomial $L_p^{(\alpha)}$ denoted by $\xi_{p,1}^{(\alpha)}, \dots , \xi_{p,p}^{(\alpha)}$.
Let $M^{(\alpha)}_p(k)$ denote the
sum of their $k$-th power. 
That is $M^{(\alpha)}_p(k) = \sum_{i=1}^p (\xi_{p,i}^{( \alpha)})^k$.
Finally, $\mathcal M_p^{(\alpha)}$ denotes the power series
determined by these coefficients, i.e.
\begin{equation} \label{mgf}
\mathcal{M}^{(\alpha)}_p(z) = p+\sum_{k=1}^\infty M^{(\alpha)}_p(k) z^k\,.
\end{equation}
Note that in case $\alpha$ is a negative integer in the interval $[-p+1,-1]$ the zero is also a root of $L_p^{(\alpha)}$, 
which explains why the case $k=0$, i.e. the zeroth moment, had to be dealt with seperately in (\ref{mgf}).
 It is known that
\begin{align*}
   \mathcal{M}^{(\alpha)}_p(z)=\frac1z \frac{(\ell_{p}^{(\alpha)})'(1/z)}{\ell_p^{(\alpha)}(1/z)}=-z\frac{(\widehat{\ell}_{p}^{(\alpha)})'(z)}{\widehat{\ell}_{p}^{(\alpha)}(z)}+p
\end{align*}
    where $ \ell^{(\alpha)}_p(x)=(-1)^p p! L_p^{(\alpha)}(x)$ is the monic version of $L_p^{(\alpha)}$,
and for any polynomial of degree $p$ we denote by
$\L(z)=z^p \ell(1/z)$ the so-called conjugate polynomial.

\begin{theorem}\label{MP_weaklimit}
Let us assume that $\alpha = \alpha_p $ and $\frac{\alpha_p}{p}\rightarrow c\in (-1,\infty)$, as $p\rightarrow \infty$.
Then the empirical distribution determined by the normalized roots (where \ $p^{-1}\ $ is the normalization factor) of the
 Laguerre-polynomial $L_p^{(\alpha_p)}$ converges weakly to the
Marchenko-Pastur distribution, given as
\begin{align}\label{MP_distr}\mu_c(A)=\begin{cases}
-c\delta_0(A)+ \nu_c(A)\,, & \mbox{ if } -1< c<0 \mbox{ and } \alpha_p\in\{-p+1,\ldots,-1\} \ \hbox{  for all  } p\,,\\
\nu_c(A)\,, & \mbox{ if } c\geq 0,
\end{cases}
\end{align}
for $ A \in \mathcal{B}(\R)$,
where $\delta_0$ denotes the Dirac-delta measure at $0$, while the measure $\nu_c$ is absolutely \\ continuous with density
$$ d\nu_c(x)= \frac{\sqrt{(x_+-x)(x-x_-)}}{2\pi x}\mathbf{1}_{[x_-,x_+]}(x)dx $$
where $x_\pm=[ \sqrt{c+1}\pm1]^2.$
\if 0\textbf{Megj.: Szerintem a fenti helyett az k\'ene, hogy}
$$ d\nu_c(x)=\frac{\sqrt{(x-x_-')(x_+'-x)}}{2\pi x}\textbf{1}_{[x_-',x_+']}(x)dx $$
\textbf{ahol} $x_\pm'=(\sqrt{c+1}\pm1)^2$. \fi
\end{theorem}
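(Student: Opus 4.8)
The plan is to work with the generating function $\mathcal{M}^{(\alpha_p)}_p(z)$ and show that, after the normalization $x\mapsto x/p$ of the roots, the corresponding rescaled generating function converges to a limit $G(z)$ which satisfies the quadratic fixed-point equation characterizing the Cauchy–Stieltjes transform (or moment generating function) of $\mu_c$. First I would translate the stated identity $\mathcal{M}^{(\alpha)}_p(z)=-z(\widehat{\ell}^{(\alpha)}_p)'(z)/\widehat{\ell}^{(\alpha)}_p(z)+p$ into a statement about the normalized roots: if $\eta_{p,i}=\xi_{p,i}^{(\alpha_p)}/p$, then $\sum_i \eta_{p,i}^k = p^{-k}M^{(\alpha_p)}_p(k)$, so I would define $g_p(z)=\frac{1}{p}\bigl(p+\sum_{k\ge 1} p^{-k}M^{(\alpha_p)}_p(k)\,z^k\bigr)=\frac1p\mathcal{M}^{(\alpha_p)}_p(z/p)$, which is $\frac1p$ times the empirical "moment generating function" of the $\eta_{p,i}$. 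The target is to show $g_p(z)\to G(z)$ coefficientwise (equivalently, that each normalized moment converges), where $G$ solves the MP fixed-point equation; weak convergence then follows from the method of moments once one checks the MP law is moment-determinate (it has compact support, so this is immediate).

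The key engine is the three-term recurrence for monic Laguerre polynomials, $\ell^{(\alpha)}_{p+1}(x)=(x-(2p+1+\alpha))\ell^{(\alpha)}_p(x)-p(p+\alpha)\ell^{(\alpha)}_{p-1}(x)$. Second I would pass to the conjugate polynomials $\widehat{\ell}$: writing $\widehat{\ell}^{(\alpha)}_p(z)=z^p\ell^{(\alpha)}_p(1/z)$, the recurrence becomes a recurrence in $z$ with polynomial coefficients. Dividing through by $\widehat{\ell}^{(\alpha)}_p$ and introducing the logarithmic-derivative ratio $R_p(z)=\widehat{\ell}^{(\alpha_p)}_p(z)/\widehat{\ell}^{(\alpha_{p-1})}_{p-1}(z)$ (a rational function, ratio of consecutive conjugate polynomials), the recursion yields a relation of the form $R_{p+1}(z)=(1-(2p+1+\alpha_p)z) - p(p+\alpha_{p-1})z^2/R_p(z)$ — the continued-fraction structure standard in this circle of ideas. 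Third, I would rescale: set $\widetilde{R}_p(z)=R_p(z/p)$ or an appropriate normalization so that the coefficients $2p+1+\alpha_p$ and $p(p+\alpha_p)$, which grow like $(2+c)p$ and $(1+c)p^2$ respectively, are absorbed; the recursion then converges (as $p\to\infty$, using $\alpha_p/p\to c$) to a fixed-point equation $R(z) = (1-(2+c)z) - (1+c)z^2/R(z)$, i.e. a quadratic $R(z)^2 - (1-(2+c)z)R(z) + (1+c)z^2 = 0$. Finally, relating $g_p$ to $R_p$ via the identity $\mathcal{M}^{(\alpha)}_p = -z(\widehat\ell{}')/\widehat\ell + p$ and the telescoping $\widehat\ell^{(\alpha_p)}_p = \prod_{j} R_j$, one identifies $G$ in terms of the limiting $R$ and checks by a direct (Lagrange-inversion / residue) computation that the resulting $G$ reproduces the moments $\int x^k\,d\mu_c(x)$ — these are the well-known Narayana-weighted sums $\sum_{j} \frac1k\binom{k}{j}\binom{k}{j-1}(c+1)^j$, matching the moments of $\mu_c$.

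I would organize this as: (i) the recurrence and its conjugate form; (ii) the rescaled ratio recursion and an a-priori bound showing $\widetilde R_p$ is (coefficientwise, i.e. as formal power series, or uniformly on a small disc) a Cauchy sequence — here one uses that each coefficient of $\widetilde R_p$ stabilizes after finitely many steps, an induction on the order of the coefficient; (iii) passage to the limit giving the quadratic equation and selection of the correct branch by the normalization $R(0)=1$; (iv) extraction of moments and identification with $\mu_c$; (v) for $-1<c<0$ with $\alpha_p$ a negative integer, separate the forced zero root of multiplicity $-\alpha_p$, which contributes the atom $-c\,\delta_0$ in the limit (this is why the $k=0$ term $p$ in $\mathcal M^{(\alpha)}_p$ is handled separately), while the remaining $p+\alpha_p$ positive roots, now roots of $L^{(-\alpha_p)}_{p+\alpha_p}$, fall under the $c\ge 0$ analysis after renormalization. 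The main obstacle I anticipate is step (ii): justifying that the formal limit of the rescaled ratio recursion is legitimate — i.e. that the normalized moments genuinely converge and that no mass escapes to infinity (tightness). The cleanest route is probably to prove directly by induction that for each fixed $k$ the normalized $k$-th moment $p^{-k}M^{(\alpha_p)}_p(k)$ is bounded and converges, reading off a recursion for these moments from the polynomial recurrence; compactness of the support of $\mu_c$ then bypasses any delicate tightness argument and lets the method of moments conclude.
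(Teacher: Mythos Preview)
Your plan follows the same architecture as the paper: conjugate polynomials, the three-term recurrence turned into a recursion for a ratio of consecutive $\widehat\ell$'s, rescaling so that the limit satisfies the Marchenko--Pastur quadratic, then method of moments; the negative-integer-$\alpha$ case via $\ell_p^{(\alpha)}=z^{-\alpha}\ell_{p+\alpha}^{(-\alpha)}$ is also exactly how the paper proceeds. One small but genuine slip: your ratio $R_p=\widehat\ell_p^{(\alpha_p)}/\widehat\ell_{p-1}^{(\alpha_{p-1})}$ mixes parameters, whereas the three-term recurrence relates $\ell_{p+1}^{(\alpha)},\ell_p^{(\alpha)},\ell_{p-1}^{(\alpha)}$ with the \emph{same} $\alpha$. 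The paper fixes $\alpha=\alpha_p$ throughout and takes $f_p^{(\alpha)}(z)=\widehat\ell_{p-1}^{(\alpha)}(z/p)/\widehat\ell_p^{(\alpha)}(z/p)$; the recursion then links $f_p^{(\alpha)}(z)$ to $f_{p-1}^{(\alpha)}\bigl(z(p-1)/p\bigr)$ --- note the shifted argument, which you do not mention.

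The real gap is your step (ii). Your claim that each coefficient of $\widetilde R_p$ ``stabilizes after finitely many steps'' is false: the coefficients depend on $\alpha_p/p$ and only \emph{converge}, they never stabilize. More seriously, even granting that every accumulation point of $f_p^{(\alpha_p)}$ solves the limiting quadratic, nothing you have written rules out oscillation between the two roots or failure of the limit to exist; the normalization $R(0)=1$ selects the branch only \emph{after} existence is known. The paper supplies two nontrivial ingredients you are missing. First, a derivative bound --- obtained from convexity and monotonicity of $\mathcal M_p^{(\alpha)}$ on $[0,\infty)$ when $\alpha\ge0$, together with the classical estimate $\xi_{p,p}^{(\alpha)}\le 4p+2\alpha+3$ --- yields $f_p^{(\alpha_p)}(z)\,f_{p-1}^{(\alpha_p)}\bigl(z(p-1)/p\bigr)-\bigl(f_p^{(\alpha_p)}(z)\bigr)^2\to0$, which is what closes the two-index recursion into a single quadratic in the limit. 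Second, an induction on $p$, based on the pointwise comparison $\eta_{p-1}^{(\alpha)}\bigl(\xi,z(p-1)/p\bigr)\le\eta_p^{(\alpha)}(\xi,z)$ of the one-step continued-fraction maps, shows $f_p^{(\alpha)}(z)\le g_{p-}^{(\alpha)}(z)$ (the smaller fixed point of the $p$-th approximate quadratic) on an explicit interval $[0,\zeta_p^{(\alpha)})$; since $g_{p-}^{(\alpha_p)}\to f_{c-}$ this forces the limit to exist and to equal the correct branch. This bound-plus-induction is the heart of the paper's argument and is absent from your outline.
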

\begin{remark}

 A more general version of this theorem -- allowing for $c < -1$ --  was proved by Mart\'inez-Gonz\'alez et al. in \cite{Mar01} using complex analysis and differential equations, but the proof presented here is based on elementary calculations using only the recursion equations
satisfied by the Laguerre-polynomials.
\end{remark}

\begin{remark}
Laguerre polynomials show a deep connection with random matrix theory in the following ways:
\begin{itemize}
 \item[1.] Forrester and Gamburd proved in \cite{for06} that the expectation of the characteristic polynomial 
of the random matrix $XX^T$ is given by $\ell_p^{(n-p)}(z)$, i.e. $E \det (x\cdot I -XX^T)=\ell_p^{(n-p)}(x)$, 
where $X$ is a $p\times n$ random matrix with independent, identically distributed entries with zero expectation and variance $1$.
\item[2.]  If $X$ is a $p\times n$ random matrix in the same sense as above, 
then the weak limit of the empirical measure of the eigenvalues is a much studied question  of random matrix theory, although it is 
usually normalized by $n$,  which in our case means a normalization by $\alpha+p$. 
A well-known theory -- proved by Marchenko and Pastur in \cite{mar67} -- states that the weak limit of the empirical measure of the eigenvalues of
$\frac1n XX^T$ is given by $\tilde{\mu}_a$ as $\frac p n \ra a > 0$, where $\tilde{\mu}_a$ is defined below. In the case of the
present paper $\mu_c$ is the weak limit of the empirical measure of the eigenvalues of $\frac1p XX^T$.

\item[3.] The matrix theoretical Marchenko-Pastur distribution with parameter $a>0$ is given by
$$ \tilde{\mu}_a(A)=\begin{cases} \left(1-\frac1a\right) \delta_0 + \tilde{\nu}_a(A) , & \mbox{ if } a\in(0,1) \\ 
\tilde{\nu}_a(A) & \mbox{ if } a\geq 1\end{cases}  \ \ \ A\in\mathcal{B}(\R). $$
with $\tilde{\nu}_a$ being absolutely continuous with density
$$ d\tilde{\nu}_a(x)= \frac{\sqrt{(x-\tilde{x}_-)(\tilde{x}_+-x)}}{2\pi  ax}\mathbf{1}_{[\tilde{x}_-,\tilde{x}_+]}(x)dx $$
where $\tilde{x}_\pm=(1\pm\sqrt{a})^2$.
As mentioned before this version of the Marchenko Pastur arises when the zeros of $\ell_p^{(\alpha_p)}(z)$ are 
normalized by a factor of $(p+\alpha_p)^{-1}$. The connection between $d\mu_c$ and $d\tilde{\mu}_a$ is the following: 
\begin{eqnarray}
	a&=&\frac1{c+1}\,, \\
	\mu_c&=& \tilde{\mu}_a\circ g^{-1} \,.
\end{eqnarray}
where $g(x)=(c+1)x$ for $x\in\R$.
On the other hand it is known that the moments of $\tilde{\mu}_a$ are given by 
\begin{equation}
	\int x^k d\tilde{\mu}_a(x)=\sum_{j=1}^k \frac 1 k\binom{k}{j}\binom{k}{j-1} a^{j-1},
\end{equation}
hence the moments of $\mu_c$ can be calculated as
$$ \int x^{k}d\mu_c=\sum_{j=1}^{k}\frac1k\binom{k}{j}\binom{k}{j-1}(c+1)^{k-j+1} .$$
\end{itemize}
\end{remark}

\begin{proof}[of the Theorem]

Let $\ell^{(\alpha)}_p(z):=(-1)^pp!L_p^{(\alpha)}(z)$ denote the monic version of $L_p^{(\alpha)}(z)$ then
\begin{align}\label{monic_lag_coeff}
\ell_p^{(\alpha)}(z)=\sum_{j=0}^{p}{(-1)^j \frac{(p)_j(\alpha+p)_j}{j!}z^{p-j}}
\end{align} with $(\beta)_k=\beta(\beta-1)\cdots(\beta-k+1)$ for $k>0$.
Note that if $\beta$ is a positive integer and $k>\beta$ then $(\beta)_k=0$.
Thus it follows from (\ref{monic_lag_coeff})
that for $\alpha \in \{ -p+1, -p+2, \dots , -2, -1\}$
\begin{equation}\label{Lag_neg_par}
    \ell_p^{(\alpha)}(z)=z^{-\alpha} \ell_{p+\alpha}^{(-\alpha)}(z)\,.
\end{equation}
This means that in this case zero is a root of $\ell_p(x)^{(\alpha)}(z)$ with multiplicity $-\alpha$ and the other $p+\alpha$ roots given by the Laguerre-polynomial  $L_{p+\alpha}^{(-\alpha)}(z)$ of degree $p+\alpha$.

\begin{itemize}
\item[1.] Let us first consider the case when $\alpha_p \geq 0$ for all $p$, which also implies $\lim \alpha_p / p = c \geq 0$.

The recursion of the Laguerre-polynomials for arbitrary parameter $\alpha>-1$ is
\begin{equation}\label{Lag_recursion}
a_pL_{p+1}^{(\alpha)}(z)=(b_p-z)L_p^{(\alpha)}(z)-c_pL_{p-1}^{(\alpha)}(z),
\end{equation}
where $a_p=p+1$, $b_p=2p+\alpha+1$ and $c_p=p+\alpha$ and
also
\begin{eqnarray}\label{Lag_recursion_2}
pL_p^{(\alpha)}(z)& = & (p+\alpha)L_{p-1}^{(\alpha)}-z L_{p-1}^{(\alpha+1)}(z) \,
\end{eqnarray}
\if 0\begin{equation}
    L_p^{(\alpha)}(x)=\sum_{j=0}^p{(-1)^j\binom{p+\alpha}{p-j}\frac{1}{j!}x^j}.
\end{equation} \fi
These polynomials are known to be orthogonal with respect to the measure $z^\alpha \e^{-z}\mathbf{1}_{[0,\infty)}(z)dz$,
which implies that all the roots of $L_p^{(\alpha)}(x)$ lie in the interval $[0,\infty)$ and
hence the sum of the $k^{th}$ power of its roots is positive. Furthermore
\begin{align} \label{Lag_diff}
\frac{d}{dz}L_p^{(\alpha_p)}(z)= -L_{p-1}^{(\alpha_p+1)}(z) \,.
\end{align}
implying, after proper algebraic transformations, that
\begin{equation}
	\frac{d}{dz}\L^{(\alpha)}_p(z) = -(\alpha+p)p \L^{(\alpha)}_{p-1}(z)
\end{equation}
where $\L_p^{(\alpha)}(z)=z^p\ell_p^{(\alpha)}(z^{-1})$.
Applying this for $\alpha = \alpha_p$ we obtain that
\begin{equation}\label{f_M_kapcs}
	\frac1p\mathcal{M}^{(\alpha_p)}_p\left(\frac{z}{p}\right) =  \frac {\alpha_p+p} p  \frac{z\L_{p-1}^{(\alpha_p)}(z/p)}{\L_p^{(\alpha_p)}(z/p)}+1
\end{equation}
Also from recursion (\ref{Lag_recursion}) we get
\begin{align}
\label{conjlag-rec}
\L_{p+1}^{(\alpha)}(z)=[1-(\alpha+2p+1)z]\L_{p}^{(\alpha)}(z)-z^2(p+\alpha)p\L_{p-1}^{(\alpha)}(z).
\end{align}
Since the largest zero of $L_p^{(\alpha)}$ is no greater then $4p + 2\alpha +3$ (see \cite{sze39}) we obtain that
$\L_p^{(\alpha)}(z)> 0$, if $0\leq z < \frac{1}{4p+2\alpha + 3}$.


In this case one has that
$$ \frac{\L_{p-1}^{(\alpha)}(z)}{\L_{p}^{(\alpha)}(z)}\leq  \frac{1-(\alpha+2p+1)z}{z^2p(\alpha+p)}\leq \frac {1}{(p+\alpha)pz^2}. $$

Using the computations above we get that
\begin{align} \label{diff_bd}
	\frac d {dz} \frac{\L_{p-1}^{(\alpha)}(z)}{\L_p^{(\alpha)}(z)}&=
\frac{(\L_{p-1}^{(\alpha)}(z))'\L_p^{(\alpha)}(z)-(\L_p^{(\alpha)}(z))'\L_{p-1}^{(\alpha)}(z)}{(\L_p^{(\alpha)}(z))^2}= \nonumber\\
	 &=(\alpha+p)p\left[ \left(\frac{\L_{p-1}^{(\alpha)}(z)}{\L_p^{(\alpha)}(z)}\right)^2-\frac{\L_{p-2}^{(\alpha)}(z)}{\L_p^{(\alpha)}(z)} \right]
+(\alpha+2p-1)\frac{\L_{p-2}^{(\alpha)}(z)}{\L_p^{(\alpha)}(z)}.
\end{align}
 Since in the present case $\mathcal{M}^{(\alpha)}_p(z)$ is a convex, monotonically increasing function for $z\geq 0$,
and furthermore $\mathcal{M}^{(\alpha)}_p(0)=1$, one has
\begin{align}\label{int-bd}
z\frac{d}{dz}\frac{\L_{p-1}^{(\alpha)}(z)}{\L_p^{(\alpha)}(z)}\leq
\int_0^{2z}{\frac{d}{dt}\frac{\L_{p-1}^{(\alpha)}(t)}{\L_p^{(\alpha)}(t)}dt}=\frac{\L_{p-1}^{(\alpha)}(2z)}{\L_p^{(\alpha)}(2z)}-1\leq \frac{1}{4(\alpha+p)pz^2} \end{align}
and so according to (\ref{diff_bd}) and to (\ref{int-bd}) we have
\begin{equation}\label{bdd}
\left|\left( \frac{\L_{p-1}^{(\alpha)}(z/p)}{\L_p^{(\alpha)}(z/p)}\right)^2-\frac{\L_{p-2}^{(\alpha)}(z/p)}{\L_p^{(\alpha)}(z/p)}\right|\leq \frac{p^3}{4(\alpha+p)^2p^2z^3}+\frac{\alpha+2p-1}{(\alpha+p)^2p^2}\frac{p^4}{(p-1)(\alpha+p-1)z^4}. \end{equation}
Let $f^{(\alpha)}_p(z):=\frac{\L_{p-1}^{(\alpha)}(z/p)}{\L_p^{(\alpha)}(z/p)}$, for $p\geq 1$.
According to (\ref{bdd}) we have
\[
f^{(\alpha)}_p(z)f^{(\alpha)}_{p-1}\left(z\frac{p-1}p\right)-(f^{(\alpha)}_p)^2(z)\ra 0
\]
if
$p\ra \infty$ and $\alpha=\alpha_p \geq 0$, especially
$f^{(\alpha_p)}_p(z)f^{(\alpha_p)}_{p-1}\left((p-1)z/p)\right)-(f^{(\alpha_p)}_p)^2(z)\ra 0$ if $\frac {\alpha_p} {p} \ra c $
as $p\ra \infty$.

Applying (\ref{conjlag-rec}) to
$\L_{p}^{(\alpha)}$ one has
\begin{eqnarray}\label{f_rec}
1& = & \left(1-z\frac{\alpha+2p-1}{p}\right)f^{(\alpha)}_p(z) - \nonumber \\ & & -z^2\frac{(p-1)(\alpha+p-1)}{p^2}f^{(\alpha)}_p(z)f^{(\alpha)}_{p-1}\left(z\frac{p-1}{p}\right)\,,
\end{eqnarray}

hence
we get that the accumulation points of $(f^{(\alpha_p)}_p(z))_{p\in\N}$ as $\alpha_p/p \ra c$ satisfy the following 
equation in $\xi$
\begin{equation}\label{fix_eq} 1= \left[1-(c+2)z\right]\xi-(c+1)z^2 \xi^2. \end{equation}
The solutions of this equation are
$$ \xi_{\pm}=\frac{1-(c+2)z\pm \sqrt{[1-(c+2)z]^2-4(c+1)z^2}}{2(c+1)z^2}\,. $$
Let us introduce the notation
$$ f_{c-}(z):=\frac{1-(c+2)z- \sqrt{[1-(c+2)z]^2-4(c+1)z^2}}{2(c+1)z^2}. $$
In order to find the appropriate root let us look at the map $\xi\ra \eta_c(\xi, z)$  for a fixed $z$ defined by
$$ 1= \left[1-(c+2)z\right]\eta_c(\xi, z)-(c+1)z^2 \xi\eta_c(\xi, z) $$
and hence
$$ \eta_c(\xi, z)= \frac{1}{1-(c+2)z-(c+1)z^2\xi}. $$
Note that the fixed points of this mapping are the solutions of (\ref{fix_eq}).

In parallel with this for any fixed $\alpha \geq 0$ and $p\geq 1$ consider the following equation in $\xi$:
\begin{equation}\label{alpha_p_eq}
1 = \left[1-z\frac{\alpha+2p-1}{p}\right] \xi - z^2 \frac{(p-1)(\alpha+p-1)}{p^2}\xi^2\,.
\end{equation}
Denote by $\zeta_p^{(\alpha)}$ the largest nonnegative $z$ value, for which both roots of this second-order equation are
non-negative, i.e.
\[
\zeta_p^{(\alpha)} = \sup \left\{ z \, \mid \, z\frac{\alpha + 2p -1}{p} \leq 1,\quad
4z^2\frac{(p-1)(\alpha+p-1)}{p^2} \leq \left(1-z\frac{\alpha+2p-1}{p}\right)^2  \right\}
\]
Short calculation shows that $\zeta_p^{(\alpha)}= \left(a_p^{(\alpha)}+2\sqrt{b_p^{(\alpha)}}\right)^{-1}$, 
where $a_p^{(\alpha)}=\frac{\alpha+2p-1}{p}$ and $b_p^{(\alpha)}=\frac{(p-1)(\alpha+p-1)}{p^2}$.

Now for $0 \leq z < \zeta_p^{(\alpha)}$ define the map
$\xi \ra \eta_p^{(\alpha)}(\xi, z)$ as the solution to
$$ 1=\left[1-z\frac{\alpha+2p-1}{p}\right]\eta_p^{(\alpha)}(\xi, z)-z^2\frac{(p-1)(\alpha+p-1)}{p^2}\eta^{(\alpha)}_p(\xi, z)\xi. $$
Thus
$$ \eta_p^{(\alpha)}(\xi, z)=\frac{1}{1-z\frac{\alpha+2p-1}{p}-z^2\frac{(p-1)(\alpha+p-1)}{p^2} \xi} $$
Observe that $\eta_p^{(\alpha_p)}(\xi,z)\xrightarrow[p\ra\infty]{} \eta_c(\xi,z) \ \ \ \forall (\xi,z)\in\R^2$. \\
For the small positive values of $\xi$  the functions $\eta_c(\xi, z)$ and $\eta_p^{(\alpha)}(\xi, z)$  are increasing.
Let us denote by
$g_{p-}^{(\alpha)}(z)$ the smaller fixed point of the mapping $\eta_p^{(\alpha)}(\xi,z)$ and observe that for $0\leq z < \zeta_p^{(\alpha)}$ the inequality
$\eta_{p}^{(\alpha)}(0, z) > 0$ holds true, thus for $0 \leq \xi < g_{p-}^{(\alpha)}(z)$ we have that
\[
\xi < \eta_p^{(\alpha)}(\xi, z) \leq  g_{p-}^{(\alpha)}(z)\,.
\]
We are going to prove by induction on $p$ that for any fixed $\alpha \geq 0$ and $0\leq z < \zeta_p^{(\alpha)}$ the inequality
\begin{equation}\label{ind_stat}
f_p^{(\alpha)}(z) \leq g_{p-}^{(\alpha)}(z)
\end{equation}
holds true. It is easy to check that for $p=1$ we have that $\zeta_{1}^{(\alpha)} = \frac{1}{\alpha+1}$ and
\[
g_{1-}^{(\alpha)}(z) = f_1^{(\alpha)} (z) = \frac{1}{1-z(\alpha+1)}\,.
\]

On the other hand straightforward calculation gives that if $0\leq z < \zeta_p^{(\alpha)}$ then
$z\frac{p-1}{p} < \zeta_{p-1}^{(\alpha)}$ thus using the induction hypothesis for $p-1$ we obtain that
\begin{equation}\label{ind_hyp}
f_{p-1}^{(\alpha)}\left(z\frac{p-1}{p}\right) \leq g_{(p-1)-}^{(\alpha)}\left(z\frac{p-1}{p}\right)\,.
\end{equation}
The latter one is the smaller fixed point of the mapping
\[
\eta_{p-1}^{(\alpha)}(\ \cdot\ , z\frac{p-1}{p}):\xi \ra \frac{1}{1-z\frac{p-1}{p}\frac{\alpha+2p-3}{p-1}-z^2\frac{(p-1)^2}{p^2}\frac{(p-2)(\alpha+p-2)}{(p-1)^2} \xi}
\]
On the other hand
\begin{align}
\eta_{p-1}^{(\alpha)}(\xi,z\frac{p-1}{p}) &= \frac{1}{1-z\frac{p-1}{p}\frac{\alpha+2p-3}{p-1}-z^2\frac{(p-1)^2}{p^2}\frac{(p-2)(\alpha+p-2)}{(p-1)^2} \xi} \\
& = \frac{1}{1-z\frac{\alpha+2p-3}{p}-z^2\frac{(p-2)(\alpha+p-2)}{p^2} \xi} \\
&\leq \frac{1}{1-z\frac{\alpha+2p-1}{p}-z^2\frac{(p-1)(\alpha+p-1)}{p^2} \xi}  = \eta_p^{(\alpha)}(\xi, z)\,,
\end{align}
proving that
\begin{equation}\label{major}
g_{(p-1)-}^{(\alpha)} \left(z\frac{p-1}{p}\right) \leq g_{p-}^{(\alpha)}(z)\,.
\end{equation}
But equation (\ref{f_rec}) implies that for $\xi = f_{p-1}^{(\alpha)}\left(z\frac{p-1}{p}\right)$
\[
\eta_p^{(\alpha)} (\xi, z) = f_p^{(\alpha)}(z)\,.
\]
Comparing (\ref{ind_hyp}) and (\ref{major}) we obtain that for $0 \leq z < \zeta_p^{(\alpha)}$
\[
f_p^{(\alpha)}(z) \leq g_{p-}^{(\alpha)}(z) 
\]
proving the induction step.

\if 0 Note that
$$ z< \min \left\{a_p^{(\alpha})^{-1},\frac1{a_p^{(\alpha)}+2\sqrt{b_p^{(\alpha)}}} \right\}=\frac1{a_p^{(\alpha})+2\sqrt{b_p^{(\alpha)}}} \Rightarrow z<\zeta_p^{(\alpha)} $$ 
where $a_p^{(\alpha)}=\frac{\alpha+2p-1}{p}$ and $b_p^{(\alpha)}=\frac{(p-1)(\alpha+p-1)}{p^2}$.\fi

 Since $a_p^{(\alpha_p)}\ra c+2$,  $b_p^{(\alpha_p)}\ra c+1$ and so 
$\zeta_p^{(\alpha_p)} \ra \frac{1}{(\sqrt{c+1}+1)^2}$,  if $\alpha_p/p\ra c$ as $p \ra \infty$ 
the following implication holds for large enough $p$:
$$ \left[0,  \frac{1}{2(\sqrt{c+1}+1)^2}\right) \subset \left[0, \zeta_p^{(\alpha_p)}\right) $$

Hence for $0\leq z<\frac{1}{2(\sqrt{c+1}+1)^2}$ we have that 
$ g_{p-}^{(\alpha_p)}(z)\xrightarrow[p\ra \infty]{} f_{c-}(z) $ as $p\ra \infty$, thus inequality (\ref{ind_stat}) implies 
that 
\[
\lim_pf_p^{(\alpha_p)}(z)=f_{c-}(z).
\]

Now let $\mathfrak{M}_c(z)=\lim_p\frac1p \mathcal{M}_p^{(\alpha_p)}\left(\frac z p \right). $ According to (\ref{f_M_kapcs}) 
we have that
 \begin{eqnarray}
    \mathfrak{M}_c(z)=(c+1)zf_{c-}(z)+1 \nonumber
  \end{eqnarray}
from which one has
\begin{align}
    \mathfrak{M}_c(z)= \frac{1-cz-\sqrt{[1-(c+2)z]^2-4(c+1)z^2}}{2z} = \frac{1-cz-\sqrt{(1-cz)^2-4z}}{2z}.
\end{align}

\item[2.] Consider now the case when $\alpha_p\in\{-p+1,\ldots,-1\}$ for all $p$ in such a way that $\lim \alpha_p / p = c$ exists and
$c > -1$. Obviously this implies $c \leq 0$.

In this case the recursion (\ref{Lag_recursion}) is still valid, but orthogonality (with respect to 
$z^{\alpha_p}e^{-z}\mathbf{1}_{[0,\infty)}(z)dz$) cannot be assured. According to (\ref{monic_lag_coeff}) one has that
$$ \ell_p^{(\alpha_p)}(z)=z^{-\alpha}\ell_{p+\alpha_p}^{(-\alpha_p)}(z) $$
and so
$$\L_p^{(\alpha_p)}(z)=z^p \ell_p^{(\alpha_p)}(1/z)= z^p z^{\alpha_p}
\ell_{p+\alpha_p}^{(-\alpha_p)}(1/z)=z^{p+\alpha_p}\ell_{p+\alpha_p}^{(-\alpha_p)}(1/z)=\L_{p+\alpha_p}^{(-\alpha_p)}(z) $$
hence
$$ \mathcal{M}_p^{(\alpha_p)}(z)=-z\frac{\frac{d}{dz}\L_{p+\alpha_p}^{(-\alpha_p)}(z)}
{\L_{p+ \alpha_p}^{(-\alpha_p)}(z)}+p=\mathcal{M}_{p+\alpha_p}^{(-\alpha_p)}(z)-\alpha_p $$
therefore we immediately get that $\mathcal{M}_p^{(\alpha_p)}(z)$ is monotonically increasing convex function if $z\geq 0$ and
$$ \frac 1p \mathcal{M}_p^{(\alpha_p)}\left( \frac z p \right)=\frac {p+\alpha_p} {p}\frac 1 {p+\alpha_p}
\mathcal{M}_{p+\alpha_p}^{(-\alpha_p)}\left(\frac {p+\alpha_p}{p} \frac z {p+\alpha_p}  \right)-\frac{\alpha_p} p. $$
Due to $\alpha_p/p\ra c $ we have $-\frac{\alpha_p}{p+\alpha_p} \ra -\frac{c}{c+1}$
and
$$ \frac 1 {p+\alpha_p} \mathcal{M}_{p+\alpha_p}^{(-\alpha_p)}\left(\frac z {p+\alpha_p} \right)
\ra 
\mathfrak{M}_{-\frac{c}{c+1}}(z) .$$

Since $f_{p+\alpha_p}^{(-\alpha_p)} $ is a sequence with uniformly bounded derivatives ( according to (\ref{int-bd}) ) one has that for $0\leq z < \left[2(\sqrt{c+1}+1)^2\right]^{-1}$
\begin{eqnarray} \mathfrak{M}_c(z)=(c+1)\mathfrak{M}_{\frac{-c}{c+1}}((c+1)z)-c \nonumber \end{eqnarray}
implying that
$$ \mathfrak{M}_{c}(z)=\frac{1-cz-\sqrt{(1-cz)^2-4z}}{2z}.$$
\end{itemize}

\if 0 According to \cite{Mar01} we have that the Stieltjes transform of $\mu_c$ is given by
$$ S_{c}(z)=\int \frac1{z-x} d\mu_c(z)=\frac{z-c-\sqrt{(z-c)^2-4z}}{2z} $$
and due to the fact that 
$$ \mathfrak{M}_c(z)=\frac1z S_c\left( \frac 1 z \right) $$
we get that $\mathfrak{M}_c(z)$ is the moment generating function of $\mu_c$. \fi
\if 0 It is easy to show that this is the generating function of the Marchenko-Pastur distribution, since the Stieltjes transform $\mathfrak{S}_c$ of it is given by
$$ \mathfrak{S}_c(z)=\frac{-z+c+\sqrt{z^2-2z(c+2)-c^2}}{2z} $$
and according to the connection between the Stieltjes transform and the moment generating function we have
$$ -\frac1z \mathfrak{S}_c \left( \frac1z\right)=\mathfrak{M}_c(z). $$ \fi 
\noindent Since $\mathfrak{M}_c(z)$ coincides with the generating function of the moments of $\mu_c$, i.e.
$$ \mathfrak{M}_c(z)=\sum_{k\geq 0 } \int x^k d\mu_c(x) \cdot z^k\,, \ \  for \ \ z\in[0,\frac{1}{2}(\sqrt{c+1}+1)^{-2}) $$ 
and $\mu_c$ is fully determined by its moments we have that the weak limit of the empirical measure of the normalized zeros of $\ell_p^{(\alpha_p)}(z)$ is $\mu_c$.
Theorem \ref{MP_weaklimit} is hereby proved.
\end{proof}
\begin{corollary}
Theorem \ref{MP_weaklimit} also implies the convergence of the moments of the empirical distribution of the normalized roots of $\ell_p^{(\alpha_p)}$. In other words if $m_p^{(\alpha_p)}$ denotes the empirical distribution of the normalized roots of $\ell_p^{(\alpha_p)}$, then
$$ \int x^k dm_p^{(\alpha_p)}(x) \xrightarrow[p\ra\infty]{} \int x^k d\mu_c(x)=\sum_{j=1}^k \frac1k \binom{k}{j}\binom{k}{j-1} (c+1)^{k-j+1} \ \ \ \forall k\geq0 $$
when $\frac{\alpha_p}{p}\ra c$. 
\end{corollary}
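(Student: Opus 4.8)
The plan is to upgrade the weak convergence of Theorem~\ref{MP_weaklimit} to convergence of moments by confining every empirical measure $m_p^{(\alpha_p)}$, for all large $p$, to one fixed compact interval that also contains the support of $\mu_c$. The tool for this is the bound on the largest zero of a Laguerre polynomial already invoked in the proof above, namely that the largest root of $L_p^{(\alpha)}$ is at most $4p+2\alpha+3$. When $\alpha_p\geq 0$ this bounds all roots of $\ell_p^{(\alpha_p)}$ directly; when $\alpha_p\in\{-p+1,\ldots,-1\}$, the polynomial $\ell_p^{(\alpha_p)}$ has $0$ as a root of multiplicity $-\alpha_p$ and its remaining $p+\alpha_p$ roots are those of $L_{p+\alpha_p}^{(-\alpha_p)}$, whose largest root is at most $4(p+\alpha_p)+2(-\alpha_p)+3=4p+2\alpha_p+3$. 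So in either case every root of $\ell_p^{(\alpha_p)}$ lies in $[0,4p+2\alpha_p+3]$, and after dividing by the normalization factor $p$ and using $\alpha_p/p\to c$, for all sufficiently large $p$ the measure $m_p^{(\alpha_p)}$ is carried by the fixed compact interval $I:=[0,2c+7]$. A direct check shows that $I$ also contains $\{0\}\cup[x_-,x_+]$, the support of $\mu_c$, since $x_+=c+2+2\sqrt{c+1}\leq 2c+7$.

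The rest is a routine application of the portmanteau theorem. First note that $m_p^{(\alpha_p)}$ is a probability measure — it places mass $p^{-1}$ on each of the $p$ roots counted with multiplicity — and that $\mu_c$ is a probability measure as well. Fix $k\geq 1$ and pick any $\varphi_k\in C_b(\R)$ that agrees with $x\mapsto x^k$ on $I$, for instance $\varphi_k(x)=\bigl(\min(\max(x,0),2c+7)\bigr)^k$. Since for large $p$ both $m_p^{(\alpha_p)}$ and $\mu_c$ live on $I$, we have $\int x^k\,dm_p^{(\alpha_p)}=\int\varphi_k\,dm_p^{(\alpha_p)}$ and $\int x^k\,d\mu_c=\int\varphi_k\,d\mu_c$, so the weak convergence $m_p^{(\alpha_p)}\to\mu_c$ of Theorem~\ref{MP_weaklimit}, applied to the bounded continuous function $\varphi_k$, yields
$$\int x^k\,dm_p^{(\alpha_p)}(x)\ \xrightarrow[p\ra\infty]{}\ \int x^k\,d\mu_c(x).$$
The closed form $\int x^k\,d\mu_c(x)=\sum_{j=1}^k\frac1k\binom{k}{j}\binom{k}{j-1}(c+1)^{k-j+1}$ is the moment formula for $\mu_c$ already displayed in the Remarks following Theorem~\ref{MP_weaklimit}, and the case $k=0$ is simply the identity $\int 1\,dm_p^{(\alpha_p)}=1=\int 1\,d\mu_c$.

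I do not expect any substantial obstacle here. The one step deserving a careful write-up is the uniform-in-$p$ localization of the normalized roots in the compact set $I$: this is exactly what prevents mass from escaping to infinity and so justifies passing from weak convergence to moment convergence, and it is where the negative-integer regime needs the extra bookkeeping for the atom at $0$. Everything else follows formally; in fact the same reasoning shows $\int f\,dm_p^{(\alpha_p)}\to\int f\,d\mu_c$ for every continuous $f$ on $\R$, not only for powers. (Alternatively, the explicit computation of the leading term of $M_p^{(\alpha_p)}(k)$ carried out in Section~2 gives this convergence by a direct evaluation, using $\int x^k\,dm_p^{(\alpha_p)}=p^{-(k+1)}M_p^{(\alpha_p)}(k)$.)
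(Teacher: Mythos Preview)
Your argument is correct. The paper states the Corollary without a separate proof; your write-up supplies the natural justification, leaning on the same Szeg\H{o} bound $4p+2\alpha+3$ for the largest Laguerre zero already invoked in the proof of Theorem~\ref{MP_weaklimit}, and your handling of the negative-integer regime via the factorization $\ell_p^{(\alpha_p)}(z)=z^{-\alpha_p}\ell_{p+\alpha_p}^{(-\alpha_p)}(z)$ is exactly right.

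One remark on how this sits relative to the paper. The paper's internal logic runs in the opposite direction: the proof of Theorem~\ref{MP_weaklimit} establishes pointwise convergence of $\tfrac{1}{p}\mathcal{M}_p^{(\alpha_p)}(z/p)$ to $\mathfrak{M}_c(z)$ on a real interval, and the Taylor coefficients of the former are precisely the moments $\int x^k\,dm_p^{(\alpha_p)}$; together with the uniform support bound this already yields coefficient-wise (i.e.\ moment) convergence, from which the paper then deduces weak convergence via the method of moments. So in the paper moment convergence is a \emph{byproduct} of the proof of Theorem~\ref{MP_weaklimit} rather than a consequence of its statement. Your route reverses this: you take the weak convergence as a black box and recover moment convergence from the compact-support localization and a bounded continuous extension of $x^k$. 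Both arguments are short and rest on the same ingredients; yours is arguably tidier because it uses only the statement of the theorem.
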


\section{The sum of the $k^{th}$ power of the roots of $L_{p}^{(\alpha)}$}

The following theorem shows that the connection between the root distribution of the Laguerre-polynomials and the Marchenko-Pastur distribution is not only an asymptotic connection but in a "dominating way" it holds for large enough $p$ values, as well.

\begin{theorem}\label{highest}
    Let $p\in\N$,  $M^{(\alpha)}_p(k):=\sum_{j=1}^p{\xi^k_{p,j}}$, where $0\leq \xi_{p,1}^{(\alpha)}<\xi_{2,p}^{(\alpha)}<\ldots<\xi_{p,p}^{(\alpha)}<\infty$
denotes the roots of $L_p^{(\alpha)}$. Then for $\alpha\in\R$, $\alpha+p>k-1$ one has
    $$ M^{(\alpha)}_p(k)= \sum_{j=1}^{ k} \frac1k\binom k j \binom {k}{j-1} p^{j} (\alpha+p)^{k-j+1}+f(\alpha+p,p) $$
    where $f$ is a polynomial in two variables with $\deg f\leq k$.\\
    In case $\alpha+p\leq k-1$ one has that the coefficient of the dominating term in $M^{(\alpha)}_p(k)$ is less than or equal to the quantity above.
\end{theorem}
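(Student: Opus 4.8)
The plan is to recognise $M^{(\alpha)}_p(k)$, for fixed $k$, as the value at $(p,\alpha+p)$ of one fixed bivariate polynomial, to bound that polynomial's degree a priori by $k+1$, and then to read off its top homogeneous part from Theorem~\ref{MP_weaklimit}. Throughout write $q:=\alpha+p$ and $N(k,j):=\frac1k\binom{k}{j}\binom{k}{j-1}$ for the Narayana numbers, so the asserted leading term is $\sum_{j=1}^{k}N(k,j)\,p^{j}q^{k+1-j}$. For the polynomiality, note first that by (\ref{monic_lag_coeff}) the $j$-th elementary symmetric function of the roots of $\ell^{(\alpha)}_p$ is $e_j=\frac{(p)_j(q)_j}{j!}$ (this is correct even for $j>p$, where it vanishes because $(p)_j=0$). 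Comparing Taylor coefficients in the identity $\widehat{\ell}^{(\alpha)}_p(z)\sum_{m\ge1}M^{(\alpha)}_p(m)z^m=-z\,(\widehat{\ell}^{(\alpha)}_p)'(z)$ yields the Newton recursion, which --- making no reference to the number of roots --- expresses $M^{(\alpha)}_p(k)$ as a fixed polynomial $W_k$ in $e_1,\dots,e_k$, also when $p<k$. Substituting, $M^{(\alpha)}_p(k)=W_k\bigl(\tfrac{(p)_1(q)_1}{1!},\dots,\tfrac{(p)_k(q)_k}{k!}\bigr)=:Q_k(p,q)$, a polynomial in $p$ and $q$ not depending on the particular pair $(p,\alpha)$, and the theorem becomes a statement about $Q_k$. (Note $Q_k$ is symmetric in $p\leftrightarrow q$, which will be convenient in the last step.)

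Next I would establish the a priori bound $\deg Q_k\le k+1$. For $\alpha>-1$ all roots of $L^{(\alpha)}_p$ lie in $[0,\,4p+2\alpha+3]=[0,\,2p+2q+3]$ by the bound of Szeg\H{o} recalled above, whence $0\le Q_k(p,q)\le p\,(2p+2q+3)^k$ for every integer $p\ge1$ and every real $q>p-1$. Restricting to a ray $q=tp$ with $t>1$ rational, $p\mapsto Q_k(p,tp)$ is a polynomial in $p$ that is $O(p^{k+1})$, hence of degree at most $k+1$; letting $t$ range over an interval of rationals forces every homogeneous component of $Q_k$ of degree exceeding $k+1$ to vanish. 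Write accordingly $Q_k=Q_k^{[k+1]}+f$ with $Q_k^{[k+1]}$ homogeneous of degree $k+1$ and $\deg f\le k$.

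To identify $Q_k^{[k+1]}$, fix $c\in(-1,\infty)$ and choose $\alpha_p$ in an admissible regime of Theorem~\ref{MP_weaklimit} with $\alpha_p/p\to c$ --- nonnegative integers if $c\ge0$, negative integers in $\{-p+1,\dots,-1\}$ if $-1<c<0$ --- and set $q_p=\alpha_p+p$. By the Corollary to Theorem~\ref{MP_weaklimit}, $p^{-(k+1)}M^{(\alpha_p)}_p(k)=\int x^k\,dm^{(\alpha_p)}_p(x)\to\sum_{j=1}^{k}N(k,j)(c+1)^{k+1-j}$. Since $q_p/p\to c+1$ and $\deg f\le k$, the left side also equals $p^{-(k+1)}Q_k(p,q_p)$, which tends to $Q_k^{[k+1]}(1,c+1)$. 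Therefore $Q_k^{[k+1]}(1,t)=\sum_{j=1}^{k}N(k,j)t^{k+1-j}$ for $t$ in an interval, hence identically; homogenising gives $Q_k^{[k+1]}(p,q)=\sum_{j=1}^{k}N(k,j)p^{j}q^{k+1-j}$. This proves $M^{(\alpha)}_p(k)=\sum_{j=1}^{k}N(k,j)p^{j}(\alpha+p)^{k+1-j}+f(\alpha+p,p)$ with $\deg f\le k$; the identity holds for all $(p,\alpha)$, the hypothesis $\alpha+p>k-1$ guaranteeing that the displayed sum is the genuinely dominating contribution.

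For $\alpha+p\le k-1$ the same polynomial identity persists, but now $\alpha+p$ is bounded while $f$ may carry total degree up to $k$, so the displayed sum need not dominate term by term; here $\ell^{(\alpha)}_p$ has at most $k-1$ nonzero roots (exactly $p$ if $\alpha>-1$, exactly $\alpha+p$ if $\alpha\in\{-p+1,\dots,-1\}$, by (\ref{Lag_neg_par})), and I would deduce the asserted comparison $M^{(\alpha)}_p(k)\le\sum_{j=1}^{k}N(k,j)\,p^{j}(\alpha+p)^{k+1-j}$ by a direct estimate based on this bound on the number of nonzero roots, and for integer $\alpha\le-1$ alternatively by reducing via (\ref{Lag_neg_par}) to $L^{(-\alpha)}_{\alpha+p}$ and invoking the already-proved inequality there together with the symmetry $N(k,j)=N(k,k+1-j)$. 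The step I expect to cost the most work is the a priori bound $\deg Q_k\le k+1$: Theorem~\ref{MP_weaklimit} only controls $M^{(\alpha)}_p(k)$ along sequences with $\alpha_p/p\to c$, and it is precisely the uniform estimate $\xi^{(\alpha)}_{p,i}\le 2p+2q+3$ that upgrades that asymptotic information to an exact polynomial identity. A self-contained alternative, not using Theorem~\ref{MP_weaklimit}, is to extract the degree-$(k+1)$ part of $Q_k$ directly from $M^{(\alpha)}_p(k)=-k\,[z^{k}]\log\widehat{\ell}^{(\alpha)}_p(z)$ after the rescaling $z\mapsto z/\bigl(p(\alpha+p)\bigr)$ and expansion of the logarithm in powers of $1/p$ and $1/(\alpha+p)$; this works but essentially re-derives the Narayana moments of the Marchenko--Pastur law and is far more computational.
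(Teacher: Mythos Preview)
Your argument is correct and takes a genuinely different route from the paper's. The paper's proof is self-contained and combinatorial: it writes $M^{(\alpha)}_p(k)$ as a determinant via Newton's identities, embeds it in a family $A(k,l)$ satisfying the recursion $A(k,l)=\sum_{r=1}^{l}p(\alpha+p-r)_{l-r}A(k-1,r)+A(k-1,l+1)$, proves $\deg A(k,l)=k+l$ by induction (using $\alpha+p>k-1$ to rule out cancellation among positive leading terms), and then identifies the top coefficients $a_j^{(k,1)}$ by interpreting them as counts of certain lattice paths and applying a cycle-lemma argument to arrive at the Narayana numbers $\frac1k\binom{k}{j}\binom{k}{j-1}$. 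By contrast, you establish the three structural facts --- polynomiality of $Q_k(p,q)$, the degree bound $\deg Q_k\le k+1$, and the identification of $Q_k^{[k+1]}$ --- by soft means: Newton's identities plus $e_j=(p)_j(q)_j/j!$, Szeg\H{o}'s root bound on rays $q=tp$, and the moment convergence already obtained as a Corollary to Theorem~\ref{MP_weaklimit}.

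What each approach buys: the paper's proof is independent of Theorem~\ref{MP_weaklimit} and in fact re-derives the Narayana moment formula from scratch, with a transparent bijective explanation of why those numbers appear; it also makes explicit where the hypothesis $\alpha+p>k-1$ enters (positivity of $(\alpha+p)_l$ preventing cancellation). Your proof is considerably shorter and more conceptual, and it yields the polynomial identity $M^{(\alpha)}_p(k)=\sum_j N(k,j)p^j(\alpha+p)^{k+1-j}+f(\alpha+p,p)$ for \emph{all} real $\alpha$ and $p\in\N$ at once, without the positivity caveat --- but it logically depends on Theorem~\ref{MP_weaklimit}, so it cannot serve, as the paper's proof implicitly does (cf.\ Remark after Theorem~\ref{highest}), as an alternative derivation of the Marchenko--Pastur moments. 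Your treatment of the residual case $\alpha+p\le k-1$ is admittedly sketchy, but the paper's own statement and proof in that regime are equally informal, so this is not a defect relative to the target.
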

\begin{proof}
    Let us consider the Newton identities $\sum_{j=0}^{k-1} M^{(\alpha)}_n(k-j)a_{p-j}=-ka_{n-k},$ where $a_{p-j}$ denote the $j^{th}$ coefficient of $\ell_p^{(\alpha)}(x)$.
    It is known that $a_j=(-1)^{p+j} p!\binom{\alpha+p}{p-j} \frac{1}{j!}$ (see e.g. \cite{sze39}), hence
    $$ a_{p-j}=(-1)^{j}\frac{(\alpha+p)_j(p)_j}{j!}. $$
    Writing the Newton identities in matrix form we obtain that
\begin{align}\label{Newtonmatrix}
    \begin{bmatrix}
        1 & 0 & 0 & \ldots & 0 \\
        a_{p-1} & 1 & 0 & \ldots &0 \\
        a_{p-2} & a_{p-1} & 1 &\ldots &0 \\
        & & &  \ddots  &  \\
        a_{p-(k-1)} & a_{p-(k-2)} & a_{p-(k-3)} & \ldots & 1
    \end{bmatrix}\begin{bmatrix}
        M^{(\alpha)}_p(1) \\ M^{(\alpha)}_p(2) \\ M^{(\alpha)}_p(3) \\ \vdots  \\ M^{(\alpha)}_p(k)
    \end{bmatrix}= \begin{bmatrix}
        -a_{p-1} \\ -2a_{p-2} \\ -3a_{p-3} \\ \vdots  \\ -ka_{p-k}
    \end{bmatrix}\,.
\end{align}
Thus
\begin{align*}
    M^{(\alpha)}_p(k)=\det \begin{bmatrix}
        1 & 0 & 0 &\ldots & -a_{p-1} \\
        a_{p-1} & 1 & 0 & \ldots & -2a_{p-2} \\
        a_{p-2} & a_{p-1} & 1 & \ldots & -3a_{p-3} \\
        & & & \ddots & \\
        a_{p-(k-1)} & a_{p-(k-2)} & a_{p-(k-3)} & \ldots & -ka_{p-k}
    \end{bmatrix}
\end{align*}
according to Cramer's rule and the fact that the determinant of the matrix in (\ref{Newtonmatrix}) is $1$.
In general, let us introduce
the following notation:
$$ A(k,l):=\det \begin{bmatrix}
    1 & 0 &  \ldots & (\alpha+p)_lp \\
    -(\alpha+p)p & 1 &  \ldots & -2\frac{(\alpha+p)_{l+1}(p)_{2}}{2} \\
    &  & \ddots & \\
\frac{(-1)^{k-1}(\alpha+p)_{(k-1)}(p)_{k-1}}{(k-1)!} &\frac{ (-1)^{k-2}(\alpha+p)_{k-2}(p)_{k-2}}{(k-2 )!}  & \ldots & -k \frac{(-1)^k(\alpha+p)_{l+k-1} (p)_{k}}{k!}
\end{bmatrix}, $$
for $k\geq 2, l \geq 1$ and $A(1,l)=(p+\alpha)_lp$  for $l\geq 1$. With this notation $A(k,1)=M^{(\alpha)}_p(k)$ and it can be proved by induction that
for $k \geq 2$
\begin{align} \label{A_recursion}A(k,l)=\sum_{r=1}^{l}{p (\alpha+p-r)_{l-r} A(k-1,r)} +A(k-1,l+1), \end{align}
In fact, for $k\geq 3$
let us  subtract $p(\alpha+p)_l$ times the first column of the matrix in the definition of $A(k,l)$ from the last of the same.
The $j^{th}$ element of the last column obtained this way can be written as
\begin{align*}
    &-(-1)^{j}\frac{(\alpha+p)_{l+j-1}(p)_{j}}{(j-1)!}-(-1)^{j-1}\frac{(\alpha+p)_l p(\alpha+p)_{j-1}(p)_{j-1}}{(j-1)!}=\\
    &\hspace{5mm}=-(-1)^{j-1}\frac{(\alpha+p)_{j-1}(p)_{j-1}}{(j-1)!}[(\alpha+p-j+1)_l(p-j+1)-(\alpha+p)_lp]=\\
    &\hspace{5mm}=(-1)^{j-1}\frac{(\alpha+p)_{j-1}(p)_{j-1}}{(j-1)!}(j-1)\left(\sum_{r=1}^l
{(\alpha+p-j+1)_r(\alpha+p-r)_{l-r}}p+1  \right)
\end{align*}
due to
$$ \prod_{i=1}^m{c_i}-\prod_{i=1}^m{d_i}=\sum_{h=1}^{m}{\prod_{1\leq e<h}c_e(c_{h}-d_{h})\prod_{m\geq e>h}d_e}, $$
with $m=l+1$, $c_i=(\alpha+p-j-i+2)$, $d_i=(\alpha+p-i+1) $ for $1\leq i \leq l$ and $c_{l+1}=(p-j+1)$, $d_{l+1}=p$. This proves the recursion in (\ref{A_recursion}) for $k\geq 3$. On the other hand
\begin{align}
& A(2,l)=\det \begin{bmatrix}
    1 & (\alpha+p)_lp \\ -(\alpha+p)p & -(\alpha+p)_{l+1}(p)_2
\end{bmatrix}=  (\alpha+p)_lp(\alpha+p-l+lp) \nonumber \\
& = \sum_{r=1}^l p (\alpha+p -r)_{l-r} (\alpha +p)_r p + (\alpha + p)_{l+1} p \nonumber
\end{align}
proving (\ref{A_recursion}) for $k=2$.

Note that in case $k\leq p+\alpha$ we have $ (\alpha+p)_l>0  $  for $0\leq l \leq k$,
and $(p+\alpha)_l=(p+\alpha)^l+O((p+\alpha)^{l-1})$ hence the multiplier in the sum in (\ref{A_recursion}) does not change
the (positive) coefficient - nor its sign - of the highest order terms of $A(k-1,r)$.

We are going to prove that viewing $A(k,l)$ as a polynomial of the variables $p$ and $p+\alpha$ one has $\deg A(k,l)=k+l$.
 The proof goes by induction on $k$. For $k=1$ and $l$ arbitrary this is an immediate consequence of its definition.
In fact -- assuming the induction hypotesis for $k-1$ and $l$ arbitrary -- we have that
\begin{align*}\deg p(p+\alpha)_{l-r} A(k-1,r)&= k-1+r+l-r+1=k+l
\quad \hbox{for} \ 1\leq r\leq l \leq k \leq p+\alpha \\  \deg A(k-1,l+1)=k+l, \end{align*}
and using that there is no cancellation in the highest degree terms we obtain that $\deg A(k,l)=k+l$, hence we immediately get that $\deg_pM_p^{(\alpha)}(k)=k+1$. \\
Computing the leading coefficient in $p$ and $\alpha+p$ of $A(k,1)$ leads to the following graph theoretical question:
Let $G=((\Z_{\geq0})^2,\overrightarrow{E})$ be the following graph: there is a directed arrow from $(a_1,b_1)$ pointing to $(a_2,b_2)$
if and only if $a_2=a_1+1$ and $b_2\geq b_1-1$. We shall also use the word edge instead of arrow in case we are not interested in its direction.
We will call an edge $(a,b_1)\ra (a+1,b_2)$ an upward edge if $ b_2\geq b_1$, if $b_2=b_1-1$ we will refer to it as a downward edge. The height of an edge $((a,b_1),(a+1,b_2))$ is going to be defined as $b_2-b_1$, total height of a set of edges is the sum of their heights.

Let us call a path ending in $(k,l)$ for $k\geq 1,l\geq 1$ legal if it starts in the origin and after that it stays strictly above the line
$y=0$.
Since $\deg_p A(k,l)=k+l$, it can be written as $A(k,l)=\sum_{j=0}^{k+l}{a^{(k,l)}_j p^j(\alpha+p)^{k+l-j}}+\text{L.O.T.}$,
for some $a_j^{(k, l)}$, $j=0, \dots , k+l$, where $\text{L.O.T.}$ means lower order terms. But the recursion (\ref{A_recursion})
implies that the degree of $p$ in $A(k, l)$ cannot be larger then $k$ and it is at least $1$, for any $l\geq 1$, thus
$A(k,l)=\sum_{j=1}^{k}{a^{(k,l)}_j p^j(\alpha+p)^{k+l-j}}+\text{L.O.T.}$
Using the recursion (\ref{A_recursion}) again we obtain that
\[
a_j^{(k, l)} = \sum_{h=1}^l a_{j-1}^{(k-1, h)} + a_j^{(k-1, l+1)}\,.
\]

Our claim is that $a_j^{(k,l)}$ is equal to the number of legal paths $b_j^{(k,l)}$ ending in $(k,l)$ with exactly $j$ upward edges.

For $k=1$, $ l \geq 1$ we have that $A(1,l)=p(\alpha+p)_l$ thus the highest order term is $p(\alpha+p)^l$ and so
$a_1 ^{(1,l)}= 1$, while $a_j^{(1, l)} = 0$ for $j \neq 1$ obviously coinciding with the values $b_j^{(1,l)}$, $j\geq 0, l\geq 1$
since in this case the path consists of one single upward edge.

For the induction step $k-1\mapsto k$ consider the following: Each of the legal paths ending in $(k,l)$ has to go through
exactly one of the points $(k-1,r)$ $1\leq r \leq l+1$.
A path with exactly $j$ upward edges going through the points $(k-1,r)$ for $1\leq r \leq l$ should have
$j-1$ upward edges before these points, while a path going through $(k-1,l+1)$ has $j$ upward edges before this point.
Therefore the number of legal paths ending in $(k,l)$ and having $j$ upward edges is the sum of the number of legal paths
ending in $(k-1,r)$ with $1\leq r\leq l$ with $j-1$ upward edges plus the number of legal paths ending in $(k-1,l+1)$ with $j$ upward edges.
In other words:
$$ b^{(k,l)}_j=\sum_{r=1}^l{b_{j-1}^{(k-1,r)}} + b_{j}^{(k-1,l+1)}. $$
Thus the number of legal paths satisfies the same recursion as the coefficients in the sequence $A(k, l)$. Since for $k=1$ they are equal
the induction argument gives that $a_j^{(k, l)} = b_j^{(k, l)}$ for $j=1, \dots k$, $k\geq 1$, $l\geq 1$.

Now let us turn our attention to computing the coefficients of the highest order term of
$M_p^{(\alpha)}(k)=A(k,1)=\sum_{j=1}^{k}a_j^{(k,1)}p^j(\alpha+p)^{k-j+1}+\text{L.O.T} $.
As we proved before the coefficient $a_j^{(k,1)}$ is given by the number of legal paths ending in $(k,1)$ with $j$ upward edges.
In this case there are $k-j$ downward edges with total height $-(k-j)$ hence the total height of the upward edges is $k-j+1$.
Since the length of the legal path from the origin to $(k,1)$ is $k$ there are $\binom{k}{j}$ possibilities to choose the positions
of the $j$ upward edges.
On the other hand the total height of the upward edges is $k-j+1$, and there are $\binom{k}{j-1}$ ways writing it as a sum of $j$
non-negative numbers when the sequence of the summands matters. Choosing these numbers as the heights of the upward edges
we obtain a path from the origin to $(k, 1)$ which is not necessarily legal,
since they can cross the line $y=0$. For such a given
path let $(x,y)$ denote the node of the path with the largest first coordinate such that its second coordinate is not greater
than the second coordinate of any other node of the path (i.e. the latest "global minimum" of the path).
By placing this node
with the tail of the path in the origin this new path is a legal path ending in $(k-x,1+y)$. Taking the first part of the original path
(connecting the origin with $(x,y)$ ) and gluing it to $(k-x,1+y)$ we will get a legal path ending in $(k,1)$. We will say
that two paths are equivalent if the cut-and-glue process described above results in the same legal path. The equivalence class of a
path consists of its periodic horizontal translations, so in each equivalence class there are $k$ paths. Since the cut-and-glue
process gives the same legal path for each equivalence class, thus the number of legal paths ending in $(k,1)$ having $j$
upward edges is given by $ \frac 1 k \binom{k}{j}\binom{k}{j-1} $, hence
$$M_p^{(\alpha)}(k)=\sum_{j=1}^{k} \frac{1}{k} \binom{k}{j}\binom{k}{j-1}p^{j}(\alpha+p)^{k-j+1}+\text{L.O.T} $$
and so Theorem \ref{highest} is proved.

\end{proof}

\begin{remark}If $\alpha/p=c$ with $c\in(-1,\infty)$ and $k < \alpha +p + 1$  then
\begin{equation} \label{MP_mom} \sum_{l=1}^p{(\xi_{p,l}^{(\alpha)})^k}=\sum_{j=1}^{k}  \frac1k\binom{k}{j}\binom{k}{j-1}(c+1)^{k-j+1} p^{k+1}  +f(\alpha+p,p) \end{equation}
hence we immediately get that $$ \int x^k dm_p^{(\alpha_p)}(x)\xrightarrow[p\ra\infty]{}\int x^k d\mu_c(x) $$
if $\frac{\alpha_p}{p} \ra c$ for all $k\geq 0$.

We also emphasize that even in the case when $\alpha<0$ is not an integer thus $L_p^{(\alpha)}(z)$ has complex roots with nonzero imaginary part, the limit relation above holds true. But since now the measure is not concentrated on the real line this property is not enough for the identification of the the limit measure.

\end{remark}

\bibliography{KM}
\bibliographystyle{plain}
\end{document}